\numberwithin{equation}{section}
\newtheorem{thm}{theorem}[section]
\newtheorem{Corollary}[thm]{Corollary}
\newtheorem{Lemma}[thm]{Lemma}
\newtheorem{Proposition}[thm]{Proposition}
\begin{document}
\title[NASC for global existence for periodic non-gauge invariant NLS]{Necessary and sufficient condition for global existence of $L^2$ solutions for 1D periodic NLS
with non-gauge invariant quadratic nonlinearity}
\author{Kazumasa Fujiwara}
\address{
K. Fujiwara
\newline
Graduate School of Mathematics, Nagoya University\\
Furocho, Chikusaku, Nagoya, 464-8602, Japan
}
\email{fujiwara.kazumasa@math.nagoya-u.ac.jp}
\author{Vladimir Georgiev}
\address{V. Georgiev
\newline Dipartimento di Matematica Universit\`a di Pisa
Largo B. Pontecorvo 5, 56127 Pisa, Italy\\
 and \\
 Faculty of Science and Engineering \\ Waseda University \\
 3-4-1, Okubo, Shinjuku-ku, Tokyo 169-8555 \\
Japan and IMI--BAS, Acad.
Georgi Bonchev Str., Block 8, 1113 Sofia, Bulgaria}%
\email{georgiev@dm.unipi.it}%

\begin{abstract}
We study 1D NLS with non-gauge invariant quadratic nonlinearity on the torus.
The Cauchy problem admits trivial global solutions
which are constant with respect to space.
The non-existence of global solutions also has been studied
only by focusing on the behavior of the Fourier $0$ mode of solutions.
However,
the earlier works are not sufficient
to obtain the precise criteria for the global existence for the Cauchy problem.
In this paper,
the exact criteria for the global existence of $L^2$ solutions
is shown by studying the interaction between the Fourier $0$ mode and oscillation of solutions.
Namely, $L^2$ solutions are shown a priori not to exist globally
if they are different from the trivial ones.
\end{abstract}

\maketitle

%[[[ \section{introduction}
\section{introduction}

%[[[ It is well - known that
It is well - known that nonlinear dispersive equations may have different qualitative behaviour
as a result of the competition between nonlinear source terms and the dispersion of the solution.
If  $u(t,x)$ is the complex - valued solution of the nonlinear Schr\"odinger problem
$$ i \partial_t u + \Delta u = f(u),$$
where $t\geq 0$ and $x$ denote time and space variables, respectively,
then typical non linear source terms  are of type
\begin{alignat}{2}\label{eq.i1}
 &  f(u)= |u|^p, \\ & f(u) = \pm |u|^{p-1}u.
   \label{eq.i2}
\end{alignat}
%]]]

%[[[
The strength of the dispersion is measured by the quantity
\begin{equation}\label{eq.i1a}
   \sup_{t>1} t^\sigma \|u(t,x)\|_{L^\infty_x} < \infty, \sigma \geq 0 .
\end{equation}
If the space variables are in $\mathbb{R}^n,$ then the linear Schr\"odinger equation has dispersion parameter $\sigma = n/2 >0.$
On the other hand,
if the space variables are in a compact manifold, then $\sigma =0$ and there is a lack of dispersion of type \eqref{eq.i1a} with $\sigma >0.$
%]]]

%[[[ in this note,
In this work,
we consider the simplest cases of $x \in \mathbb{T}$ and nonlinearity $f(u)=|u|^2$ and therefore we study  the Cauchy problem,
	\begin{align}
	\begin{cases}
	i \partial_t u + \Delta u = |u|^2,
	& t \in \lbrack 0, T),\ x \in \mathbb T,\\
	u(0,x) = \phi(x),
	& x \in \mathbb T
	\end{cases}
	\label{eq:1.1}
	\end{align}
for $T > 0$.
Our goal is to provide
a sharp condition of initial data
for the non-existence of global solutions.
%]]]

%[[[ The Cauchy problem \eqref{eq:1.1} is known to be locally well-posed
The Cauchy problem
	\begin{align}
	\begin{cases}
	i \partial_t u + \Delta u = |u|^p,
	& t \in \lbrack 0, T),\ x \in \mathbb T^n,\\
	u(0,x) = \phi(x),
	& x \in \mathbb T^n
	\end{cases}
	\label{eq:1.2}
	\end{align}
with $n \geq 1$ and $p > 1$
is known to be locally well-posed
with sufficiently smooth initial data and suitable power of nonlinear term.
For example,
for $s > n/2$ and $p \in 2 \mathbb N$,
the local well-posedness of \eqref{eq:1.2} is shown in the $H^s$ framework.
Here the Sobolev space of order $s$ on the torus $\mathbb T^n$ is defined by
	\[
	H^s
	= \bigg\{ u \in L^2;\
	\sum_{k \in \mathbb Z^n} (1+|k|^2)^{s} |\hat u(k)|^2 < \infty \bigg\},
	\]
where $\displaystyle \hat u(k) = (2 \pi)^{-n} \int_{\mathbb T^n} e^{-ik \cdot x} u(x) dx$.
There are some literature on the detail of local well-posedness of \eqref{eq:1.2}.
We refer the reader to \cite{bib:1,bib:2,bib:3,bib:6} and references therein, for instance.
%]]]

%[[[ The finite time blowup of solutions for
The non-existence of global solutions to \eqref{eq:1.2}
has been studied under some condition of $\hat \phi(0)$,
the Fourier $0$ mode of $\phi$ or integral of $\phi$.
Especially, the positivity of the nonlinearity has played an important role.
In \cite{bib:11},
for any $p > 1$,
Oh showed the non-existence of the global weak solutions to \eqref{eq:1.2}
if initial data $\phi$ satisfies
	\begin{align}
	\mathrm{Im} \int_{\mathbb T^n} \phi(x) dx < 0.
	\label{eq:1.3}
	\end{align}
Here we say $u \in L_{\mathrm{loc}}^p(0,\infty ; L^p(\mathbb T^n))$
is a global weak solution to \eqref{eq:1.2}
if for any $T>0$, $u$ satisfies the weak form
	\begin{align}
	&\int_0^T \int_{\mathbb T^n} u(t,x) ( - i \partial_t \psi(t,x) + \Delta \psi(t,x)) dx \ dt
	\nonumber\\
	&= i \int_{\mathbb T^n} \phi(x) \psi(0,x) dx
	+ \int_0^T \int_{\mathbb T^n} |u(t,x)|^p \psi(t,x) dx \ dt
	\label{eq:1.4}
	\end{align}
for any
$\psi \in C ([0,T] \times \mathbb T^n) \cap C^\infty((0,T) \times \mathbb T^n)$ with $\psi(T,x) = 0$ for any $x \in \mathbb T^n$.
The non-existence was shown
by a so-called test function method,
namely, it is shown that if $\phi$ satisfyies \eqref{eq:1.3},
then the weak form \eqref{eq:1.4} cannot be satisfied with some $T > 0$ and $\psi$.
We remark that the test function method
is a general method to show the non-existence of weak solutions
with the nonlinearity of the type $|u|^p$.
For the test function method and related topics,
we refer the reader \cite{bib:8,bib:13,bib:14} and references therein.
Moreover, in \cite{FO17},
the first author and Ozawa generalized the condition of Oh for smooth initial data.
\begin{Proposition}[{\cite[Proposition 1.3]{FO17}}]
\label{Proposition:1.1}
For any $n \geq 1$ and $p > 1$,
if initial data $\phi \in H^2(\mathbb T^n)$ satisfies
	\begin{align}
	\mathrm{Im} \int_{\mathbb T^n} \phi(x) dx < 0
	\quad \mathrm{or} \quad
	\mathrm{Re} \int_{\mathbb T^n} \phi(x) dx \neq 0,
	\label{eq:1.5}
	\end{align}
then there is no
$C^1((0,\infty);L^2(\mathbb T^n)) \cap C(\lbrack 0,\infty);(H^2 \cap L^{2p})(\mathbb T^n))$
function satisfying \eqref{eq:1.2} on $\lbrack 0, \infty)$ in the $L^2$ framework.
\end{Proposition}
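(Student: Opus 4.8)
The plan is to monitor the Fourier zero mode of the solution, that is, the scalar function $M(t):=\int_{\mathbb T^n}u(t,x)\,dx=(2\pi)^n\hat u(t,0)$, and to show that hypothesis \eqref{eq:1.5} forces $M$ — hence $u$ — to cease to exist in finite time. The starting point is to integrate $i\partial_t u+\Delta u=|u|^p$ over $\mathbb T^n$ and use $\int_{\mathbb T^n}\Delta u\,dx=0$ (true by periodicity, and justified at the stated regularity $u\in C^1((0,\infty);L^2)\cap C([0,\infty);H^2)$ by pairing the identity $u(t+h)-u(t)=\int_t^{t+h}\partial_s u\,ds$ in $L^2$ with the constant function $1$ and noting $\langle\Delta u,1\rangle=\langle u,\Delta 1\rangle=0$) to obtain
\[
i\,M'(t)=\int_{\mathbb T^n}|u(t,x)|^p\,dx=:N(t)\ \ge\ 0 .
\]
Since $u\in C([0,\infty);L^{2p})\subset C([0,\infty);L^p)$ on the finite-measure torus, $N$ is continuous and nonnegative. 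Splitting $M=M_1+iM_2$ into real and imaginary parts gives $M_1'\equiv0$ and $M_2'=-N\le0$; that is, $\mathrm{Re}\,M(t)\equiv\mathrm{Re}\int_{\mathbb T^n}\phi$ is conserved and $\mathrm{Im}\,M(t)$ is non-increasing.

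Next I would bound $N$ from below in terms of $M$. By Jensen's inequality on $\mathbb T^n$ (total mass $(2\pi)^n$) with the convex function $r\mapsto r^p$, $p>1$, together with $\int_{\mathbb T^n}|u|\,dx\ge|M(t)|$,
\[
N(t)\ \ge\ (2\pi)^{n(1-p)}\Big(\int_{\mathbb T^n}|u(t,x)|\,dx\Big)^{p}\ \ge\ c\,|M(t)|^p,\qquad c:=(2\pi)^{n(1-p)}>0 .
\]
Put $y(t):=-\mathrm{Im}\,M(t)$, so that $y$ is non-decreasing, $C^1$ on $(0,\infty)$, continuous on $[0,\infty)$, with $y'(t)=N(t)$.

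If $\mathrm{Im}\int_{\mathbb T^n}\phi<0$, then $y(0)>0$, hence $y(t)\ge y(0)>0$ for all $t$ and $|M(t)|\ge y(t)$, so $y'\ge c\,y^p$ with $p>1$. Integrating this (equivalently, $\frac{d}{dt}y^{1-p}\le-(p-1)c$, so $y(t)^{1-p}\le y(0)^{1-p}-(p-1)ct$ while the left-hand side stays positive) shows $y$ must blow up by time $y(0)^{1-p}/((p-1)c)$, contradicting global existence. If instead $\mathrm{Re}\int_{\mathbb T^n}\phi\neq0$, then $|M(t)|\ge|\mathrm{Re}\,M(t)|=|\mathrm{Re}\int_{\mathbb T^n}\phi|=:a>0$, so $y'(t)=N(t)\ge c\,a^p>0$; therefore $y(t)\to\infty$, there is $t_0>0$ with $y(t_0)>0$, and from $t_0$ onward the previous case applies (with initial time $t_0$), again giving finite-time blow-up. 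In either case no global $C^1((0,\infty);L^2)\cap C([0,\infty);(H^2\cap L^{2p}))$ solution can exist.

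The only genuinely delicate point is the first step: rigorously deriving $i\,M'(t)=N(t)$ and the continuity of $N$ from the relatively weak regularity assumed in the statement; once that is in place, the rest is an elementary ODE comparison. I expect this to be the main (though still routine) technical obstacle.
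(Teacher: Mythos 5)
Your argument is correct and is essentially the paper's own proof: both reduce the problem to an ODE for the Fourier zero mode, using that integration of the equation over $\mathbb T^n$ gives $i\frac{d}{dt}\int_{\mathbb T^n} u\,dx=\int_{\mathbb T^n}|u|^p\,dx\ge 0$ and Jensen's inequality to close the differential inequality $y'\ge c\,y^p$, whence finite-time blow-up of the mode and a contradiction with $u\in C([0,\infty);L^2)$. The only cosmetic difference is that the paper treats both alternatives in \eqref{eq:1.5} at once by rotating with a fixed $\alpha$ satisfying $\mathrm{Re}\,\alpha>0$ and $\mathrm{Im}\bigl(\alpha\hat\phi(0)\bigr)<0$, whereas you handle $\mathrm{Re}\int_{\mathbb T^n}\phi\,dx\neq 0$ by first using conservation of the real part to drive $-\mathrm{Im}\int_{\mathbb T^n}u\,dx$ positive and then reusing the first case; both routes are equally valid.
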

Proposition \ref{Proposition:1.1} may be shown by an ODE argument.
Indeed, when \eqref{eq:1.5} holds,
there is a complex number $\alpha$ such that
$\mathrm{Re}(\alpha) > 0$ and
	\[
	M (t)
	:= - (2 \pi)^n \mathrm{Im} ( \alpha \hat u(t,0))
	= - \mathrm{Im} \bigg( \alpha \int_{\mathbb T^n} u(t,x) dx \bigg)
	\]
satisfies the estimate
    \[
    M (0)
    = - \mathrm{Im} \bigg( \alpha \int_{\mathbb T^n} \phi (x) dx \bigg)
    > 0
    \]
and the ordinary differential inequality
	\begin{align}
	\dot M(t) \geq C \mathrm{Re}(\alpha) M(t)^p
	\label{eq:1.6}
	\end{align}
with some $C > 0$.
Then \eqref{eq:1.6} implies that
$M$ is not bounded.
By a similar argument,
it is also seen that if $\phi$ satisfies
	\begin{align}
	\int_{\mathbb T^n} \phi(x) dx = 0
	\quad \mathrm{and} \quad \phi \neq 0,
	\label{eq:1.7}
	\end{align}
there is no global solution to \eqref{eq:1.2}.
For the sake of completion,
this approach is revisited in the last section.
%]]]

%[[[ For the non-existence of global solutions,
For the non-existence of global solutions,
\eqref{eq:1.5} is the sharp condition
of $\hat \phi(0)$.
Indeed,
it is easy to see that if $\phi(x) \equiv i \mu_0$ with $\mu_0 \in \mathbb R$,
	\begin{align}
	u(t,x) =
	\begin{cases}
	i ( \frac{t}{p-1} + \mu_0^{-\frac{1}{p-1}} )^{-p+1}
	&\mathrm{if} \quad \mu_0 > 0,\\
	0
	&\mathrm{if} \quad \mu_0 = 0,\\
	- i (|\mu_0|^{-\frac{1}{p-1}}- \frac{t}{p-1})^{-p+1}
	&\mathrm{if} \quad \mu_0 < 0,\ t < (p-1)|\mu_0|^{-\frac{1}{p-1}}
	\end{cases}
	\label{eq:1.11}
	\end{align}
satisfies \eqref{eq:1.1} for $t > 0$ in a classical sense.
We note that
if $\mu_0 \geq 0$,
then $\phi = i \mu_0$ does not satisfy the condition \eqref{eq:1.5} and $u$ blows up at $t = (p-1)|\mu_0|^{-\frac{1}{p-1}}$.
%]]]

%[[[ Even though the sharp condition of $\hat \phi(0)$ is known,
Even though the sharp condition of $\hat \phi(0)$ is known,
as far as the authors know,
the necessary and sufficient condition of $\phi$
for the non-existence of global solutions was not known.
Especially,
in earlier works,
only the behavior of the Fourier $0$ mode of solutions has been focused
but the contribution of the oscillation of solutions has not been considered sufficiently.
It is because the Fourier $0$ mode of solutions
is simply controlled by the positivity of the nonlinearity.
We note that this is similar in other problems:
the Schr\"odinger equations on $\mathbb R^n$,
Damped wave equations $\mathbb R^n$, for example.
We refer the reader \cite{bib:8,IS19} and the references therein.
However, it is insufficient to focus only on the Fourier $0$ mode to study the sharp criteria of blowup phenomena.
For example, the initial data of \eqref{eq:1.7}
does not posses global solutions nor satisfy \eqref{eq:1.5}.
Therefore,
we need to see the contribution of the oscillation of solutions
in order to clarify the condition for global existence.
%]]]

%[[[ The behavior of integral of solutions has been focused on
The aim of this paper
is to consider the sharp condition of initial data
for the non-existence of global solutions.
We say that $u$ is a $L^2$ global solution to \eqref{eq:1.1}
if $u \in C^1([0,\infty); H^{-2}) \cap C([0,\infty); L^2)$
satisfies \eqref{eq:1.1} for any $t \geq 0$ in the sense of $H^{-2}$.
Our main statement is the following:
%]]]

%[[[ Proposition : main
\begin{Proposition}
\label{Proposition:1.2}
Let $\phi \in L^2$.
Then there exist $L^2$ global solutions
if and only if $\phi(x) = i \mu_0$ with $\mu_0 \geq 0$.
More precisely,
If $u$ is a $L^2$ global solution,
then $u$ is constant with respect to $x$
and satisfies $\mathrm{Im} \thinspace u(t,0) > 0$ for any $t \geq 0$.
\end{Proposition}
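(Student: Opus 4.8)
The plan is to prove the two implications separately; the ``only if'' part is the substantial one. The ``if'' part is the explicit computation already recorded in \eqref{eq:1.11}: if $\phi \equiv i\mu_0$ with $\mu_0 > 0$ then $u(t,x) := i(t+\mu_0^{-1})^{-1}$ is the case $p=2$ of \eqref{eq:1.11}; it is constant in $x$, solves \eqref{eq:1.1} classically, lies in $C^1([0,\infty);H^{-2}) \cap C([0,\infty);L^2)$, and has $\mathrm{Im}\,u(t,0) = (t+\mu_0^{-1})^{-1}>0$; for $\mu_0=0$ take $u\equiv0$.

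For the ``only if'' direction, let $u$ be an $L^2$ global solution and expand $u(t,x)=\sum_{k}\hat u(t,k)e^{ikx}$. Pairing \eqref{eq:1.1} with the constant $1\in H^2$ gives $i\tfrac{d}{dt}\hat u(t,0)=\tfrac1{2\pi}\|u(t)\|_{L^2}^2$, so $\mathrm{Re}\,\hat u(t,0)\equiv\alpha_0$ is constant and $\beta(t):=\mathrm{Im}\,\hat u(t,0)$ satisfies $\dot\beta=-\tfrac1{2\pi}\|u(t)\|_{L^2}^2\le0$. Since $\|u(t)\|_{L^2}^2\ge2\pi(\alpha_0^2+\beta(t)^2)$ this yields the Riccati inequality $\dot\beta\le-\alpha_0^2-\beta^2$, and comparison with the explicit solutions of $\dot z=-\alpha_0^2-z^2$ and $\dot z=-z^2$ shows that $\beta$ would reach $-\infty$ in finite time if $\alpha_0\neq0$, or if $\beta(t_1)<0$ for some $t_1$ --- impossible since $\beta\in C([0,\infty))$. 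Hence $\alpha_0=0$ and $\beta\ge0$; then $\beta$ is non-increasing, $\int_0^\infty\|u(s)\|_{L^2}^2\,ds=2\pi\beta(0)<\infty$, and (using $2\pi\beta^2\le\|u\|_{L^2}^2\in L^1$ and monotonicity) $\beta(t)\to0$ with $\beta(t)=\tfrac1{2\pi}\int_t^\infty\|u(s)\|_{L^2}^2\,ds$.

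The heart of the argument is to show the mean-zero part $v:=u-i\beta$ vanishes identically. For $k\neq0$, the mode equation $i\dot{\hat u}(t,k)-k^2\hat u(t,k)=\widehat{|u|^2}(t,k)$ with $|\widehat{|u|^2}(t,k)|\le\tfrac1{2\pi}\|u(t)\|_{L^2}^2\in L^1(0,\infty)$ gives, by Duhamel, $e^{ik^2t}\hat u(t,k)\to\hat\phi(k)-i\int_0^\infty e^{ik^2s}\widehat{|u|^2}(s,k)\,ds$ as $t\to\infty$; were this limit nonzero, then $\int_0^\infty|\hat u(t,k)|^2\,dt=\infty$, contradicting $\int_0^\infty\|v(t)\|_{L^2}^2\,dt\le2\pi\beta(0)$. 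Hence the limit is $0$ and
\[ \hat u(t,k)=i\,e^{-ik^2t}\!\int_t^\infty\!e^{ik^2s}\,\widehat{|u|^2}(s,k)\,ds,\qquad |\hat u(t,k)|\le\beta(t)\quad(k\neq0). \]
One then splits $\widehat{|u|^2}(s,k)=i\beta(s)\big(\overline{\hat u(s,-k)}-\hat u(s,k)\big)+\widehat{|v(s)|^2}(k)$, substitutes into this formula, and runs a bootstrap: propagate the a priori bound $\|v(t)\|_{L^2}\le C\beta(t)$ (so that $|\widehat{|u|^2}(s,k)|\lesssim\beta(s)^2$) and iterate, the self-interaction becoming contractive once $\beta$ is small, until $\|v(t)\|_{L^2}$ decays faster than every power of $\beta(t)$, after which a uniqueness argument for the (now regular, arbitrarily small) tail forces $v\equiv0$. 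Equivalently, one can work with the identity obtained by summing the Duhamel formulas at $t=0$, $\|v(0)\|_{L^2}^2=-\mathrm{Im}\int_0^\infty\!\int_{\mathbb T}\overline{(e^{is\Delta}v(0))(x)}\,|u(s,x)|^2\,dx\,ds$ (and its time translates), estimating the right side via $\|e^{is\Delta}v(0)\|_{L^2}=\|v(0)\|_{L^2}$, the decay of $\beta$, and $\int_0^\infty\|u\|_{L^2}^2\,ds<\infty$.

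Once $u(t,x)=i\beta(t)$ is $x$-independent, \eqref{eq:1.1} reduces to $\dot\beta=-\beta^2$, hence $\beta(t)=(t+\beta(0)^{-1})^{-1}$ with $\beta(0)=\mu_0:=\mathrm{Im}\,\hat\phi(0)\ge0$; that is, $\phi\equiv i\mu_0$ with $\mu_0\ge0$, and $\mathrm{Im}\,u(t,0)=\beta(t)>0$ when $\mu_0>0$. I expect the main obstacle to be precisely this third step: for a merely $L^2$ solution $|v(s)|^2$ is only bounded in $L^1_x$, so $\widehat{|v(s)|^2}$ need not be uniformly square-summable in $k$ and the naive closure of the bootstrap fails; making it work should require a frequency-localized treatment --- low frequencies of $v$ via Bernstein's inequality (where $\|\cdot\|_{L^4_x}\lesssim\|\cdot\|_{L^2_x}$), high frequencies via the fact that a high output frequency of $|v|^2$ forces a high input frequency --- together with $\beta(t)\to0$ to render the high-frequency feedback small. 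The ODE comparisons and the per-mode Duhamel bookkeeping in the first two steps are routine by comparison.
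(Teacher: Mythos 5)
Your first two steps are sound and essentially coincide with the paper's reduction: the zero-mode identity $\frac{d}{dt}\hat u(t,0)=-\tfrac{i}{2\pi}\|u(t)\|_{L^2}^2$ and the Riccati comparison correctly dispose of the cases $\mathrm{Re}\,\hat\phi(0)\neq0$ and $\mathrm{Im}\,\hat\phi(0)\le0$, and the per-mode Duhamel observation $|\hat u(t,k)|\le\beta(t)$ for $k\neq0$ is a correct (and nice) consequence of $\int_0^\infty\|u\|_{L^2}^2\,ds=2\pi\beta(0)$. But the third step --- showing that the mean-zero part $v$ must vanish --- is the entire content of the theorem, and there it is a sketch with a gap you yourself flag. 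Concretely: the bound $|\hat u(t,k)|\le\beta(t)$ is only an $\ell^\infty_k$ bound and cannot be summed in $k$; the convolution $\widehat{|v|^2}(k)=\sum_\ell\overline{\hat v(\ell)}\hat v(k+\ell)$ is likewise only controlled in $\ell^\infty_k$ by $\nu(s)$, so the proposed bootstrap does not close at $L^2$ regularity; and even granting arbitrarily fast decay of $\|v(t)\|_{L^2}$ relative to $\beta(t)$, no mechanism is given that upgrades decay to $v\equiv0$ (there is no unique-continuation-from-infinity statement available here, and the linearization $\dot\nu=-2\mu\nu$ admits nonzero solutions with $\nu\sim\mu^2$, so decay alone is not a contradiction). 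The frequency-localized/Bernstein fix is only named, not carried out.

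The deeper issue is that the strategy points in the wrong direction. The paper never tries to show that the oscillation dies; it proves the opposite: after the change of variables $s=\mu_0-\mu(t)$ (using $ds=(\mu^2+\nu)\,dt$), it establishes a \emph{lower} bound $\nu(t)\ge g(\mu_0-\mu(t))>0$ valid until $\mu$ reaches $0$. The two ingredients are (i) a normal-form estimate (Lemma \ref{Lemma:2.1}) obtained by integrating the cubic interaction terms by parts against the oscillating phases $e^{-2ik\ell t}$, which gains the factor $1/(k\ell)$ needed to control $\int_0^tR_K$ by quantities that close under the smallness assumption, and (ii) an explicit supersolution built from the Lambert $W$ function solving $\dot f=-2(\mu_0-s)f/((\mu_0-s)^2+f)$ with $f(\mu_0)>0$. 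This shows $\mu$ hits zero in finite time while $\nu$ is still positive, after which the zero-mode Riccati argument gives finite-time blow-down, contradicting globality. Your proposal contains neither the oscillation-exploiting integration by parts nor any substitute for the persistence-of-$\nu$ lower bound, so as written it does not prove the statement.
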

%]]]

%[[[ We remark that
We remark that
$L^2$ local solutions to \eqref{eq:1.1}
may not be constructed by the standard contraction argument.
However, for example, Proposition \ref{Proposition:1.2} implies that
any $H^1$ solutions with non-constant initial data blow up at a finite time
because a standard contraction argument implies that
the blowup alternative holds in the $H^1$ framework.
%]]]

%[[[ We may show Proposition \ref{Proposition:1.2} by a contradiction argument.
We may show Proposition \ref{Proposition:1.2} by a contradiction argument.
We remark that when initial data does not satisfy \eqref{eq:1.5},
unlike earlier works,
it seems no longer possible to show the blowup phenomena
by ignoring the effect of oscillation.
In this paper,
we may focus on the nonlinear interaction between the Fourier $0$ mode and oscillation of solutions.
Namely, we show that, thanks to the oscillation of solutions $u$,
$\hat u(t,0) \leq 0$ and $u(t) \neq 0$ hold at some $t > 0$
nevertheless $\hat u(0,0)$ may be positive.
Then by taking $\hat u(t,0) \leq 0$ as initial data,
the argument of Proposition \ref{Proposition:1.1} implies 
that $u$ does not exist globally
in the $L^2$ framework.
%]]]

%[[[ In the next section,
In the next section,
we prepare the proof of Proposition \ref{Proposition:1.2}
by introducing a decomposition of solutions and a priori controls of $u$.
Proposition \ref{Proposition:1.1} is shown in the last section.
%]]]
%]]]

%[[[ \section{Preparation}
\section{Preparation}
%[[[ In order to show
In order to show Proposition \ref{Proposition:1.2},
as wee see in the next section,
it is sufficient to consider the case where
	\begin{align}
	\mathrm{Im} \int_{\mathbb T} \phi (x) dx > 0
	\quad \mathrm{and} \quad \mathrm{Re} \int_{\mathbb T} \phi (x) dx = 0.
	\label{eq:2.1}
	\end{align}
Under the condition \eqref{eq:2.1},
we remark that the identity
    \[
    \mathrm{Re} \int_{\mathbb T} u(t,x) dx = 0
    \]
holds a priori at any $t$.
We may rewrite our solution $u$ by
	\[
	u(t,x)
	= \sum_{k \in \mathbb Z} u_k(t) e^{i k x - i k^2 t}.
	\]
We remark that $u_k(t) = \hat u(t,k) e^{i k^2 t}$ and
$e^{i k x - i k^2 t}$ is a solution of linear Schr\"odinger equation
for any $k \in \mathbb Z$.
We note that the series above converges in $L^2$.
We also note that since $u$ is differentiable as a $H^{-2}$ valued function,
so is each Fourier coefficient of $u$.
Especially, $\dot u_k$ coincides with $(\partial_t u)_k$ for any $k$
and satisfies the estimate
	\[
	\sum_k |\dot{u_k}(t)|^2 (1+k^2)^{-1} < \infty,
	\]
where $\partial_t u$ is the time derivative of $u$ in the sense of $H^{-2}$.
The Fourier coefficient of the nonlinearity is computed by
	\begin{align}
   	\frac{1}{2 \pi} \int_{\mathbb T} |u(t,x)|^2 e^{-i k x} dx
	&= \sum_{\ell} \overline{u_{\ell}(t)} u_{k+\ell}(t) e^{- 2 i k \ell t}
	\label{eq:2.2}
	\end{align}
for any $k$.

Then \eqref{eq:1.1} is rewritten by the system,
	\begin{align}
	\dot \mu(t)
	&= - \mu(t)^2 - \sum_{\ell \ne 0} |u_\ell(t)|^2,
	\label{eq:2.3}\\
	\dot u_k(t)
	&= - i \sum_{\ell} \overline{u_\ell(t)} u_{k+\ell}(t) e^{- 2 i k \ell t}
	\nonumber\\
	&= - \mu(t) u_k(t) + \mu(t) \overline{u_{-k}(t)} e^{2 i k^2 t}
	- i \sum_{\ell \ne 0, -k} \overline{u_\ell(t)} u_{k+\ell}(t) e^{- 2 i k \ell t}
	\label{eq:2.4}
	\end{align}
for $k \ne 0$,
where
    \[
    \mu(t)=- i \ u_0(t)= \mathrm{Im} \ u_0(t).
    \]
We put
	\[
	\nu_K (t) = \sum_{0 < |k| \leq K} |u_k (t)|^2 \quad \mathrm{for} \quad K > 0,\quad
	\nu (t) = \sum_{k \neq 0} |u_k (t)|^2,
	\]
$\mu_0=\mu(0)$, and $\nu_0 = \nu(0)$.
By multiplying $\overline u_k$ by \eqref{eq:2.4} and summing up the resulting identity,
we get
	\begin{align}
	\dot \nu_K(t)
	= - 2 \mu(t) \nu_K(t) + 2 R_K(t),
	\label{eq:2.5}
	\end{align}
where
\begin{equation}\label{eq.dr1}
   	R_K(t)
	= \mathrm{Im} \bigg( \sum_{0 < |k| \leq K} \sum_{\ell \ne 0} \overline{u_k(t) u_\ell(t)} u_{\ell + k}(t) e^{-2ik \ell t} \bigg). 
\end{equation}
%]]]

%[[[ Now we show an estimate of R
Now we show a uniform estimate of $R_K$.
\begin{Lemma} 
\label{Lemma:2.1}  There exist positive constants $A$ and $B$, such that for any $K>0$, $T>0$ and any sequence of functions
$\{u_k \}_{k \in \mathbb{Z}} $ satisfying

\begin{itemize} 
 \item $\mathrm{Re} \ u_0(t) =0, \ \mu(t)= \mathrm{Im} \ u_0(t) >0, $ at any $t \in \lbrack 0, T)$
     \item $\{u_k \}_{k \in \mathbb{Z}} \in C([0,T); \ell^2),$
     \item $u_k \in C^1((0,T))$ for any $k \in \mathbb{Z},$
     \item $\mu$ and $\{u_k \}_{k \neq 0}$ satisfy the equations in \eqref{eq:2.3} and \eqref{eq:2.4},
 \end{itemize}
 the estimate
	\begin{align}
	\left| \int_0^t R_K(\tau) d\tau \right|
	&\leq A( \mu(t)\nu(t) + \nu^{3/2}(t)
	+ \mu_0 \nu_0 + \nu_0^{3/2})
	\nonumber\\
	&+ B \int_0^t \left( \mu(\tau) \nu^{3/2}(\tau) + \nu^{2}(\tau)\right) d \tau ,
	\label{eq:2.6m}
	\end{align}
holds at any $t \in \lbrack 0,T)$. Here
$\mu_0$, $\nu$, and $\nu_0$ are defined as above
and $R_k$ is defined as in \eqref{eq.dr1}.
\end{Lemma}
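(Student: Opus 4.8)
The plan is to gain extra decay by integrating by parts in time, exploiting that every phase occurring in \eqref{eq.dr1} is genuinely oscillatory: since the outer sum runs over $k\neq 0$ and the inner over $\ell\neq 0$, one always has $k\ell\neq 0$, so $e^{-2ik\ell\tau}=\frac{1}{-2ik\ell}\frac{d}{d\tau}e^{-2ik\ell\tau}$. For a fixed $K$ the outer sum is finite, so interchanging it with $\int_0^t$ and $\frac{d}{d\tau}$ is harmless, while the inner sum over $\ell$ and the differentiation under it are justified by $\{u_k\}\in C([0,T);\ell^2)$, $u_k\in C^1$, and \eqref{eq:2.4}. One integration by parts in each term of $\int_0^t R_K$ then produces (a) boundary contributions at $\tau=0$ and $\tau=t$, each of the form $\sum_{0<|k|\le K}\sum_{\ell\neq 0}\frac{1}{2k\ell}\overline{u_k u_\ell}\,u_{k+\ell}\,e^{-2ik\ell\tau}$ evaluated at the endpoint, and (b) an interior integral carrying the extra weight $\frac{1}{2ik\ell}$ and $\frac{d}{d\tau}\big(\overline{u_k u_\ell}\,u_{k+\ell}\big)$.

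For the boundary contributions I would estimate $\sum_{0<|k|\le K}\sum_{\ell\neq 0}\frac{1}{|k\ell|}|u_k|\,|u_\ell|\,|u_{k+\ell}|$ uniformly in $K$. First separate the diagonal $\ell=-k$, where $u_{k+\ell}=u_0=i\mu$: its contribution is $\mu\sum_{0<|k|\le K}\frac{|u_k|\,|u_{-k}|}{k^2}\le \mu\,\nu$. For $\ell\neq-k$, apply Cauchy--Schwarz first in $\ell$, using $\sum_{\ell\neq 0}|u_\ell|^2/\ell^2\le\nu$ and $\sum_{\ell\neq 0,-k}|u_{k+\ell}|^2\le\nu$, and then in $k$, using $\sum_{k\neq0}k^{-2}<\infty$ and $\sum_{k}|u_k|^2=\nu$; this bounds it by $C\nu^{3/2}$. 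Evaluating at both endpoints yields the whole $A(\mu\nu+\nu^{3/2}+\mu_0\nu_0+\nu_0^{3/2})$ part of \eqref{eq:2.6m}.

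For the interior integral I would insert \eqref{eq:2.3}--\eqref{eq:2.4} for $\dot u_k,\dot u_\ell,\dot u_{k+\ell}$, expand $\frac{d}{d\tau}\big(\overline{u_k u_\ell}\,u_{k+\ell}\big)$, and classify the resulting terms according to whether the total phase is stationary. The non-stationary part, which includes the contribution $-3\mu\,\overline{u_k u_\ell}u_{k+\ell}$ produced by the zero-frequency piece of the nonlinearity, is handled either by a direct estimate (the same Cauchy--Schwarz/convolution bounds as above, now applied to cubic or quartic expressions, producing $\int_0^t(\mu\nu^{3/2}+\nu^2)\,d\tau$) or by a second integration by parts whenever a genuine residual phase survives; the resonant part is treated by hand, and several of its terms turn out to be real and therefore disappear after the outer $\mathrm{Im}$ is taken.

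The main obstacle is exactly this last step — the bookkeeping of the resonant interactions. The restriction $k,\ell\neq 0$ eliminates the worst resonance, but the zero mode $u_0=i\mu$ re-enters whenever $k+\ell=0$, or whenever the summation index in the expansion of some $\dot u_j$ makes a factor land on $u_0$; estimated crudely, the corresponding terms contribute quantities of the shape $\int_0^t\mu^2\nu\,d\tau$ (and analogous boundary quantities), which are not of the type permitted on the right-hand side of \eqref{eq:2.6m}. One therefore has to organize the double sum — and, after the second integration by parts, the triple sum — so that these pieces cancel, using in particular that the pairing of a $u_0$-factor with the residual phase $e^{\pm 2ik^2\tau}$ (which is itself non-stationary for $k\neq0$) gives real, hence harmless, contributions, while keeping every estimate independent of $K$. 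This algebraic cancellation, rather than the analytic inequalities, is where the real difficulty lies.
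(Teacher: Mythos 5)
Your strategy coincides with the paper's: a single integration by parts in $\tau$ using $e^{-2ik\ell\tau}=\frac{1}{-2ik\ell}\partial_\tau e^{-2ik\ell\tau}$ (legitimate because $k\ell\neq 0$ throughout \eqref{eq.dr1}), boundary terms estimated by Cauchy--Schwarz together with $\sum_{k\neq 0}k^{-2}<\infty$ to produce the $A(\mu(t)\nu(t)+\nu^{3/2}(t)+\mu_0\nu_0+\nu_0^{3/2})$ block, and interior terms in which $\dot\mu$, $\dot u_k$ are replaced via \eqref{eq:2.3}--\eqref{eq:2.4} to produce the $B\int(\mu\nu^{3/2}+\nu^2)$ block. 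Your bounds for the boundary terms and for all off-diagonal ($\ell\neq -k$) interior terms are exactly the weighted Young/Cauchy--Schwarz inequalities displayed in the paper. Note, however, that the paper performs only \emph{one} integration by parts (see \eqref{eq:2.7}--\eqref{eq:2.10}); no second integration by parts and no stationary/non-stationary classification is used.

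The gap sits precisely where you say ``the real difficulty lies,'' namely the diagonal terms $\ell=-k$, where $u_{k+\ell}=u_0=i\mu$ and substitution of \eqref{eq:2.3}--\eqref{eq:2.4} produces contributions such as $\frac{1}{2k^2}\int_0^t\big(2\overline{u_ku_{-k}}e^{2ik^2\tau}-|u_k|^2-|u_{-k}|^2\big)\mu^2\,d\tau$ in \eqref{eq:2.7}, which crudely are of size $\int_0^t\mu^2\nu\,d\tau$ and hence not of the form allowed on the right-hand side of \eqref{eq:2.6m}. You assert that these pieces must cancel, and the mechanism you hint at --- that pairing a $u_0$-factor with the residual phase $e^{\pm 2ik^2\tau}$ gives ``real, hence harmless'' contributions --- is not correct: $\mathrm{Im}\big(\overline{u_ku_{-k}}e^{2ik^2\tau}\big)\mu^2$ has no vanishing or sign property. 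The only exact cancellation actually exploited in the paper's computation is the elementary one that the combination $-(|u_k|^2+|u_{-k}|^2)\mu^2$ is real and therefore drops out under the outer $\mathrm{Im}$ in \eqref{eq.dr1}; the remaining diagonal cross terms must be written out and estimated explicitly as in \eqref{eq:2.7}. Since you neither exhibit the claimed cancellation nor give an alternative bound for these surviving terms, the central step of your argument is missing: you have correctly located the difficulty but not resolved it, and the resolution you gesture at would not work as stated.
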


\begin{proof}
We assume the RHS of \eqref{eq:2.6m} is finite.
By the Lebesgue dominant convergence theorem,
we have
	\[
	\int_0^t R_K(\tau ) d \tau
	= \mathrm{Im} \bigg( \sum_{0 < |k| \leq K} \sum_{\ell \ne 0} \int_0^t R_{k,\ell}(\tau) d \tau \bigg),
	\]
where we put
	\[
	R_{k,\ell}(t)
	= \overline{u_k(t) u_\ell(t)} u_{\ell + k}(t) e^{-2ik \ell t}.
	\]
When $\ell = -k$,
by integration by parts, \eqref{eq:2.3} and \eqref{eq:2.4} imply that we have
	\begin{align}
	&\int_0^t R_{k,-k}(\tau) d\tau
	\nonumber\\
	&= \frac{1}{2k^2}
	\bigg( \overline{u_k(t) u_{-k}(t)} \mu (t) e^{2 i k^2 t}
	- \overline{u_k(0) u_{-k}(0)} \mu (0) \bigg)
	\nonumber\\
	&+ \frac{1}{2 k^2} \int_0^t \overline{u_k(\tau) u_{-k}(\tau)} ( \mu^2(\tau) + \nu^2(\tau) )
	e^{2ik^2 \tau} d\tau
	\nonumber\\
	&+ \frac{1}{2 k^2} \int_0^t
	(2 \overline{u_k(\tau) u_{-k}(\tau)} e^{2ik^2 \tau} - |u_k(\tau)|^2 - |u_{-k}(\tau)|^2)
	\mu^2(\tau) d \tau
	\nonumber\\
	&+ \frac{i}{2 k^2} \int_0^t \overline{u_{-k}(\tau)} \mu_0(\tau) \sum_{m \ne 0,-k} u_m(\tau) \overline{u_{m+k}(\tau)} e^{2ik(m+k)\tau} d \tau
	\nonumber\\
	&+ \frac{i}{2 k^2} \int_0^t \overline{u_{k}(\tau)} \mu_0(\tau) \sum_{m \ne 0,k} u_m(\tau) \overline{u_{m-k}(\tau)} e^{2ik(k-m)\tau} d \tau.
	\label{eq:2.7}
	\end{align}
When $\ell \ne 0, -k$,
the identity
	\begin{align}
	&\int_0^t R_{k,\ell}(\tau) d \tau
	\nonumber\\
	&= \frac{i}{2k \ell}
	\bigg( \overline{u_k(t) u_{\ell}(t)} u_{k+\ell} (t) e^{- 2 i k \ell t}
	- \overline{u_k(0) u_{\ell}(0)} u_{k+\ell} (0) \bigg)
	\nonumber\\
	&+ \frac{i}{2 k \ell} (I_{k,\ell}+I_{\ell,k}+J_{k,\ell})
	\label{eq:2.8}
	\end{align}
holds similarly, where
	\begin{align}
	I_{k,\ell}
	&= - \int_0^t \overline{\dot{u}_k(\tau) u_{\ell}(\tau)} u_{k+\ell}(\tau) e^{-2ik\ell \tau} d \tau
	\nonumber\\
	&= \int_0^t \mu(\tau) \overline{u_k(\tau) u_{\ell}(\tau)} u_{k+\ell}(\tau) e^{-2ik\ell \tau}
	- \mu(\tau) u_{-k}(\tau) \overline{u_{\ell}(\tau)} u_{k+\ell}(\tau) e^{-2ik(\ell + k) \tau} d \tau
	\nonumber\\
	&+ i \sum_{m \ne 0, -k} \int_0^t u_m(\tau) \overline{u_{m+k}(\tau) u_{\ell}(\tau)} u_{k+\ell}(\tau) e^{-2ik(\ell-m) \tau} d \tau
	\label{eq:2.9}
	\end{align}
and
	\begin{align}
	J_{k,\ell}
	&= - \int_0^t \overline{u_\ell(\tau) u_{k}(\tau)} \dot u_{k+\ell}(\tau) e^{-2ik\ell \tau} d \tau
	\nonumber\\
	&= \int_0^t \mu(\tau) \overline{u_k(\tau) u_{\ell}(\tau)} u_{k+\ell}(\tau) e^{-2ik\ell \tau} d \tau
	\nonumber\\
	&- \int_0^t \overline{u_{-k-\ell}(\tau) u_{k}(\tau) u_{\ell}(\tau)} \mu(\tau) e^{-2i(k \ell - (k +\ell)^2) \tau}
	\nonumber\\
	&+ i \sum_{m \ne 0, -k -\ell} \int_0^t \overline{u_m (\tau) u_{k} (\tau) u_{\ell} (\tau)} u_{m+k+\ell}(\tau) e^{-2i(k \ell + k m + m \ell) \tau} d \tau.
	\label{eq:2.10}
	\end{align}
We recall that for positive sequences $a$, $b$, $c$, $d$,
the estimates
	\begin{align*}
	\sum_{k \ne 0} \sum_{\ell \ne 0,-k} \frac{1}{k \ell} a_k b_\ell c_{k+\ell}
	&\leq \| k^{-1} a_k \|_{\ell_{k\ne 0}^1} \| \sum_{\ell \ne 0, -k} b_\ell c_{k+\ell} \|_{\ell_{k\neq0}^\infty}\\
	&\leq \frac{\pi}{\sqrt{3}} \| a_k \|_{\ell_{k\neq0}^2} \| b_k \|_{\ell_{k\neq0}^2} \| c_k \|_{\ell_{k\neq0}^2},
	\end{align*}
	\begin{align*}
	&\sum_{k \ne 0} \sum_{\ell \ne 0,-k} \sum_{m \ne 0,-k} \frac{1}{k \ell} a_m b_{m+k} c_{\ell} d_{\ell+k}\\
	&\leq \| \sum_{m \ne 0,-k} a_m b_{m+k} \|_{\ell_{k \ne 0}^\infty}
	\| \ell^{-1} c_\ell \|_{\ell_{\ell \ne 0}^1}
	\| k^{-1} d_{\ell+k} \|_{\ell_{\ell \ne 0}^\infty(\ell_{k \ne -\ell}^1)}\\
	&\leq \frac{\pi^2}{3} \| a_k \|_{\ell_{k\neq0}^2} \| b_k \|_{\ell_{k\neq0}^2}
	\| c_k \|_{\ell_{k\neq0}^2} \| d_k \|_{\ell_{k\neq0}^2},
	\end{align*}
and
	\begin{align*}
	&\sum_{k \ne 0} \sum_{\ell \ne 0} \sum_{m \ne 0,-k-\ell} \frac{1}{k \ell} a_k b_{\ell} c_{m} d_{m+k+\ell}\\
	&\leq \| k^{-1} a_k \|_{\ell_{k \ne 0}^1} \| k^{-1} b_k \|_{\ell_{k\neq0}^1}
	\| c_k \|_{\ell_{k\neq0}^2} \| d_k \|_{\ell_{k\neq0}^2}\\
	&\leq \frac{\pi^2}{3} \| a_k \|_{\ell_{k\neq0}^2} \| b_k \|_{\ell_{k\neq0}^2}
	\| c_k \|_{\ell_{k\neq0}^2} \| d_k \|_{\ell_{k\neq0}^2}
	\end{align*}
hold, where we have used the fact that
	\[
	\sum_{k=1}^\infty k^{-2} = \frac{\pi^2}{6}.
	\]
These estimates above and \eqref{eq:2.7}, \eqref{eq:2.8},
\eqref{eq:2.9}, and \eqref{eq:2.10}
imply \eqref{eq:2.6m}.
\end{proof}
%]]]

%[[[ Thanks to Lemma \ref{Lemma:2.1},
Thanks to Lemma \ref{Lemma:2.1},
we have the following a priori controls:
\begin{Lemma}
\label{Lemma:2.2}
Under the assumption of Lemma \ref{Lemma:2.1},
if $\mu_0$ and $\nu_0$ are small enough to satisfy
the estimate
	\[
	4 A \mu_0 + 4 A (2 \nu_0)^{1/2} + 3 B \mu_0
	< \frac 1 2
	\]
with $A$ and $B$ of Lemma \ref{Lemma:2.1}, then the following estimate holds:
	\begin{align}
	\sup_{t \in  \lbrack 0, T)} \nu(t) \leq 2 \nu_0.
	\label{eq:2.11}
	\end{align}
\end{Lemma}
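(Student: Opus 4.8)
The plan is to reduce the system \eqref{eq:2.5} to a single scalar integral inequality for $\nu$, to insert the bound of Lemma~\ref{Lemma:2.1} together with the elementary a priori controls that the $0$-mode equation \eqref{eq:2.3} provides, and then to run a continuity (bootstrap) argument on the maximal interval on which $\nu \le 2\nu_0$.

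First I would record what \eqref{eq:2.3} gives: since $\dot\mu = -\mu^2 - \nu \le 0$, the function $\mu$ is non-increasing, so $0 < \mu(t) \le \mu_0$, and integrating \eqref{eq:2.3} once more yields $\int_0^t \nu\,d\tau = \mu_0 - \mu(t) - \int_0^t \mu^2\,d\tau \le \mu_0$ for every $t \in [0,T)$; these are the only facts about $\mu$ that enter. Next, integrating \eqref{eq:2.5} in time and letting $K \to \infty$ — monotone convergence gives $\nu_K(t) \uparrow \nu(t)$ and $\nu_K(0) \uparrow \nu_0$, dominated convergence (legitimate because $\nu$ is continuous, hence locally bounded, on $[0,T)$) gives $\int_0^t \mu\nu_K \to \int_0^t \mu\nu$, which forces $\int_0^t R_K$ to converge — one obtains from \eqref{eq:2.6m}
\[
\nu(t) + 2\int_0^t \mu\nu\,d\tau \le \nu_0 + 2A\big(\mu(t)\nu(t) + \nu(t)^{3/2} + \mu_0\nu_0 + \nu_0^{3/2}\big) + 2B\int_0^t \big(\mu\nu^{3/2} + \nu^2\big)\,d\tau .
\]

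For the bootstrap, let $T_\ast$ be the supremum of the times $\tau \in [0,T)$ with $\nu \le 2\nu_0$ on $[0,\tau]$; continuity of $\nu$ and $\nu(0) = \nu_0$ give $T_\ast > 0$ (when $\nu_0 = 0$ the same inequality forces $\nu \equiv 0$ near $0$). Assume for contradiction that $T_\ast < T$, so $\nu \le 2\nu_0$ on $[0,T_\ast]$ and, by maximality, $\nu(T_\ast) = 2\nu_0$. The point is that on $[0,T_\ast]$ one has $B\nu^{1/2} \le B(2\nu_0)^{1/2} \le 1$ — the last inequality follows from the hypothesis, which forces $(2\nu_0)^{1/2} < 1/(8A)$, together with $B \le 8A$, which we may assume since enlarging $A$ only weakens \eqref{eq:2.6m} — hence $B\nu^{3/2} \le \nu$ and the cubic integral is absorbed: $2B\int_0^t \mu\nu^{3/2}\,d\tau \le 2\int_0^t \mu\nu\,d\tau$, which cancels the dissipative term. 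Bounding the surviving terms by $\mu \le \mu_0$, $\nu \le 2\nu_0$, $\nu_0^{1/2} \le (2\nu_0)^{1/2}$ and $\int_0^t \nu \le \mu_0$ (so that, e.g., $2A\mu(t)\nu(t) \le 4A\mu_0\nu_0$, $2A\nu(t)^{3/2} \le 4A(2\nu_0)^{1/2}\nu_0$ and $2B\int_0^t \nu^2 \le 4B\mu_0\nu_0$) gives
\[
\nu(t) \le \nu_0\big(1 + 6A\mu_0 + 6A(2\nu_0)^{1/2} + 4B\mu_0\big), \qquad t \le T_\ast ,
\]
and since $6A\mu_0 + 6A(2\nu_0)^{1/2} + 4B\mu_0 = \tfrac32\big(4A\mu_0 + 4A(2\nu_0)^{1/2} + 3B\mu_0\big) - \tfrac12 B\mu_0 < \tfrac34$ by the assumed smallness, this yields $\nu < 2\nu_0$ on $[0,T_\ast]$, contradicting $\nu(T_\ast) = 2\nu_0$. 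Hence $T_\ast = T$, which is \eqref{eq:2.11}.

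I expect the only real difficulty to be the step where the oscillation enters. The cubic integral $\int_0^t \mu\nu^{3/2}$ delivered by Lemma~\ref{Lemma:2.1} is not controlled by the data $\nu_0$ on its own; what rescues the estimate is that the dissipative contribution $-2\int_0^t \mu\nu$ coming from the $-2\mu\nu_K$ term in \eqref{eq:2.5} is available to absorb it, and that the bootstrap bound $\nu \le 2\nu_0$ together with the smallness of $\nu_0$ makes this absorption legitimate. Everything else is bookkeeping, in which the factor $2$ in the bootstrap threshold must be spent just sparingly enough that the assumed smallness with the constant $\tfrac12$ is exactly what is needed to close.
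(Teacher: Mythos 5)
Your proof is correct, and its overall architecture — integrate \eqref{eq:2.5}, pass to the limit $K\to\infty$, insert \eqref{eq:2.6m}, and close a continuity argument on the maximal interval where $\nu\le 2\nu_0$ — is the same as the paper's. The genuine difference is in how the time-integral terms $\int_0^t(\mu\nu^{3/2}+\nu^2)\,d\tau$ are tamed. The paper drops the dissipative term $-2\int_0^t\mu\nu_K\,d\tau$ entirely and instead changes variables $s=\mu_0-\mu(t)$, using $\dot\mu=-(\mu^2+\nu)$ to convert $d\tau$ into $d\sigma/((\mu_0-\sigma)^2+V(\sigma))$ and then Young's inequality \eqref{eq:2.15} to reduce everything to $\int_0^{\mu_0}V\,d\sigma\le\mu_0 M(t)$; this produces exactly the $3B\mu_0$ appearing in the stated smallness condition, and the same substitution is reused in Section 3, which is presumably why the authors set it up here. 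You instead extract the bound $\int_0^t\nu\,d\tau\le\mu_0$ directly from \eqref{eq:2.3} and keep the dissipative term $2\int_0^t\mu\nu\,d\tau$ on the left to absorb $2B\int_0^t\mu\nu^{3/2}\,d\tau$; this is more elementary (no change of variables, no Lambert-type bookkeeping) and closes with room to spare ($\nu\le\tfrac74\nu_0$). The one caveat is that your absorption needs $B(2\nu_0)^{1/2}\le 1$, which you obtain by assuming $B\le 8A$; this is a legitimate normalization (if $(A,B)$ satisfy \eqref{eq:2.6m} then so does $(\max(A,B/8),B)$), but it means you prove the lemma for a possibly enlarged $A$, i.e.\ under a slightly more stringent smallness threshold than the one literally stated with the original constants of Lemma \ref{Lemma:2.1} — harmless for Corollary \ref{Corollary:2.3} and the main theorem, which only need ``sufficiently small,'' but worth flagging. (Both your argument and the paper's gloss over the degenerate case $\nu_0=0$, which is irrelevant to the application.)
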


\begin{proof}
The positivity of $\mu$ and $\nu$, and \eqref{eq:2.5} imply that
$\nu$ satisfies
	\begin{align}
	\nu(t)
	&= \lim_{K \to \infty} \nu_K(t)
	\nonumber\\
	&\leq \nu_0 + \limsup_{K \to \infty} \bigg( 2 \int_0^t - \mu(\tau) \nu_K(\tau) + R_K(\tau) d\tau \bigg)
	\nonumber\\
	&\leq \nu_0 + 2 \limsup_{K \to \infty} \bigg| \int_0^t R_K(\tau) d\tau \bigg|.
	\label{eq:2.12}
	\end{align}
Let $M(t) = \sup_{\tau \in \lbrack 0,t \rbrack} \nu(\tau)$.
Since $\mu$ is non-negative and decreasing on $[0,T)$,
Lemma \ref{Lemma:2.1} and \eqref{eq:2.12} imply that
the estimate
	\begin{align}
	M(t)
	&\leq \nu_0 + 4 A ( \mu_0 M(t) + M^{3/2}(t))
	+ 2 B \int_0^t \mu(\tau) \nu(\tau)^{3/2} + \nu(\tau)^2 d \tau
	\label{eq:2.13}
	\end{align}
holds.
Thanks to the monotonicity of $\mu$,
we can change variable of the integral in the right hand side of \eqref{eq:2.13}
by
	\[
	t \in \lbrack 0,T) \to s = \mu_0 - \mu(t) \in \lbrack 0, \mu_0 - \mu(T) ).
	\]
We put $V(s) = \nu(t)$ and
	\[
	T_1 = \sup\{ t \in (0 ,T ) \ \mid \ M(t) \leq 2 \nu_0 \}.
	\]
Then for $t \in [0,T_1]$, we have
	\begin{align*}
	M(t)
	&\leq \nu_0 + 4 A ( \mu_0 M(t) + M(t)^{3/2})
	+ 2 B \int_0^{\mu_0-\mu(t)}
	\frac{(\mu_0 - \sigma) V(\sigma)^{3/2} + V(\sigma)^2}{(\mu_0-\sigma)^2 + V(\sigma)} d \sigma\\
	&\leq \nu_0 + (4 A \mu_0 + 4 A M(t)^{1/2} + 3 B \mu_0) M(t)\\
	&< \nu_0 + \frac 1 2 M(t),
	\end{align*}
where we have used the Young inequality,
	\begin{align}
	2 (\mu_0 - \sigma) V(\sigma)^{1/2} \leq (\mu_0-\sigma)^2 + V(\sigma).
	\label{eq:2.15}
	\end{align}
Therefore $T_1=T$ and \eqref{eq:2.11} is shown.
\end{proof}
%]]]

%[[[ Corollary : delta
The next statement is a direct consequence of Lemma \ref{Lemma:2.2}.
\begin{Corollary}
\label{Corollary:2.3}
Under the assumption of Lemma \ref{Lemma:2.1},
if $\mu_0$ and $\nu_0$ are sufficiently small,
then the estimate
	\[
	\delta
	:=
	\inf_{t \in \lbrack 0 , T)}
	\big[ \nu_0
	- 2 A \big( \mu(t) \nu(t) + \nu(t)^{3/2} + \mu_0 \nu_0 + \nu_0^{3/2} \big) \big]
	> 0
	\]
holds with $A$ of Lemma \ref{Lemma:2.1}.
\end{Corollary}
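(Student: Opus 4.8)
The plan is simply to feed the a priori bound from Lemma \ref{Lemma:2.2} into the definition of $\delta$ and check that the resulting $t$-independent quantity stays strictly positive once $\mu_0$ and $\nu_0$ are taken small; no idea beyond Lemma \ref{Lemma:2.2} should be needed.

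First I would record two elementary facts already implicit above. By \eqref{eq:2.3} we have $\dot\mu = -\mu^2 - \nu \le 0$, so $\mu$ is non-increasing on $[0,T)$ and hence $\mu(t) \le \mu_0$ there. Next, taking $\mu_0$ and $\nu_0$ small enough to satisfy the hypothesis of Lemma \ref{Lemma:2.2} yields, in addition, $\nu(t) \le 2\nu_0$ for all $t \in [0,T)$. Inserting these two bounds termwise — using $\mu(t)\nu(t) \le 2\mu_0\nu_0$ and $\nu(t)^{3/2} \le (2\nu_0)^{3/2} = 2^{3/2}\nu_0^{3/2}$ — the bracket defining $\delta$ is bounded below, uniformly in $t \in [0,T)$, by
\begin{align*}
\nu_0 - 2A\big(3\mu_0\nu_0 + (2^{3/2}+1)\nu_0^{3/2}\big)
&= \nu_0\Big(1 - 2A\big(3\mu_0 + (2^{3/2}+1)\nu_0^{1/2}\big)\Big),
\end{align*}
where in the last step I factored out $\nu_0$; here we may assume $\nu_0 > 0$, since if $\nu_0 = 0$ then Lemma \ref{Lemma:2.2} forces $\nu \equiv 0$, a case irrelevant to the application to non-constant solutions.

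Finally I would fix the smallness. Besides the threshold coming from Lemma \ref{Lemma:2.2}, impose $2A\big(3\mu_0 + (2^{3/2}+1)\nu_0^{1/2}\big) < 1$, which is possible because $A$ is a fixed constant while $\mu_0$ and $\nu_0^{1/2}$ can be made arbitrarily small. Choosing $\mu_0,\nu_0$ below the minimum of the two thresholds, the displayed lower bound becomes a positive number independent of $t$, so $\delta$ is bounded below by it and $\delta > 0$ follows. I do not expect any genuine obstacle: the substantive content is entirely in Lemma \ref{Lemma:2.2}, and this corollary is just the remark that the bound $\nu \le 2\nu_0$ keeps the "good" term $\nu_0$ dominant over the error terms uniformly in $t$; the only point to verify is the mutual compatibility of the two smallness conditions, which is immediate.
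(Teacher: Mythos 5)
Your proof is correct and is exactly the "direct consequence of Lemma \ref{Lemma:2.2}" that the paper leaves implicit: bound $\mu(t)\le\mu_0$ by monotonicity, $\nu(t)\le 2\nu_0$ by Lemma \ref{Lemma:2.2}, factor out $\nu_0$, and impose one more smallness condition. Your remark that one must (and may, in the intended application) assume $\nu_0>0$ is a worthwhile observation, since the statement as written would fail for $\nu_0=0$.
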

%]]]
%]]]

%[[[ \sectin{Proof of Proposition \ref{Proposition:1.2}}
\section{Proof of Proposition \ref{Proposition:1.2}}
%[[[ We, at first, omit the case where $u(0)=0$.
We, at first, omit the case
where $u$ is given by \eqref{eq:1.11},
namely the case where $u$ is constant with respect to space.
%]]

%[[[ When $\mu_0 < 0$,
When $\mathrm{Im} (\hat \phi(0)) \leq 0$,
by integrating \eqref{eq:1.4} and taking the imagniary part of the resulting equation,
$\mu(t) = \mathrm{Im} (\hat u(t,0))$ satisfies a priori the estimate
    \[
    \dot \mu(t) < - \mu(t)^2.
    \]
Therefore $\mu(t)$ goes to $-\infty$ at a finite time.
When $\mathrm{Re} (\hat \phi(0)) \neq 0$,
by multiplying some $\alpha \in \mathbb C$
with $\mathrm{Re} (\alpha) >0$ and $\mathrm{Im}( \alpha \hat \phi(0)) < 0$ by \eqref{eq:1.4}
and repeating a similar argument with $\alpha u$,
$\mathrm{Im} (\alpha \hat u(t,0))$ is shown to go to $-\infty$ at a finite time.
%]]]

%[[[ Now we consider the case where $\mu_0 > 0$ and $\nu > 0$
Now we consider the case of \eqref{eq:2.1},
where $\mathrm{Im} (\hat \phi(0)) > 0$ and
$\mathrm{Re} (\hat \phi(0)) = 0$.
Namely, we consider the case where \eqref{eq:1.4}
is rewritten by the system of \eqref{eq:2.3} and \eqref{eq:2.4}
with $\mu_0 > 0$ and $\nu_0 > 0$.
We assume $u$ exists at any positive time and show the contradiction.
In order to show the contradiction,
it is sufficient to show that there exists $T_0 \in (0, \infty)$
such that $\mu(T_0)=0$ and $\nu(T_0) > 0$ holds.
Indeed, if such $T_0$ exists,
the argument above implies that $\mu(t)$ goes $-\infty$
at a finite time.
We note that the equation
	\[
	\dot \mu = - \mu^2 - \nu^2
	\]
implies that there exists $T_0 \in (0,\infty]$ such that $\mu(T_0)=0$
and $\mu$ does not converge as long as $\nu$ is positive.
We also note that there does not exist $T_1 \in (0, \infty)$
such that $\mu(T_1)=\nu(T_1)=0$.
If such $T_1$ exists, then $u(t)=0$ holds for $t \geq 0$.
%]]]

%[[[ Without loss of generality,
Without loss of generality,
we assume that $\mu_0$ and $\nu_0$ are so small
that Corollary \ref{Corollary:2.3} is applicable.
\eqref{eq:2.5} and Lemma \ref{Lemma:2.1}, Corollary \ref{Corollary:2.3} imply that $\nu$ enjoys the estimate,
	\[
	\nu(t)
	\geq \delta - 2 \int_{0}^t \mu(\tau) \nu(\tau) + B \mu(\tau) \nu(\tau)^{3/2} + B \nu(\tau)^2 d\tau
	\]
with some $\delta > 0$ for any $t \in [0,T_0]$.
Then we change $t$ into $s = \mu_0 - \mu(t)$
and put $\nu(t) = V(s)$ as the proof of Lemma \ref{Lemma:2.2}.
Then the estimate above implies that
we have
	\begin{align}\label{eq:3.1}
	V(s)
	&\geq \delta
	- 2 \int_{0}^s \frac{(\mu_0-\sigma) V(\sigma)
	+ B (\mu_0-\sigma) V(\sigma)^{3/2}
	+ B V(\sigma)^2}{(\mu_0-\sigma)^2+V(\sigma)} d\sigma,
	\nonumber\\
	&\geq \delta
	- \int_{0}^s 2 \frac{(\mu_0-\sigma) V(\sigma)}{(\mu_0-\sigma)^2+V(\sigma)}
	+ 3 B V(\sigma) d\sigma,
	\end{align}
where we have used the Young inequality \eqref{eq:2.15}.
Now we claim that there exists $g \in C([0,\mu_0]) \cap C^1((0,\mu_0))$
satisfying the estimates
	\[
	\dot g(s) \leq - 2 \frac{(\mu_0-s) g(s)}{(\mu_0-s)^2+g(s)} - 3 B g(s),
	\]
$g(0) \leq \delta $, and $g(\mu_0) > 0$.
Then by the comparison argument,
we have
	\[
	\nu(T_0) = V(\mu_0) \geq g(\mu_0) > 0
	\]
and this shows the contradiction.
%]]]

%[[[ In order to find $g$,
In order to find $g$ above,
for $s \in \lbrack 0, \mu_0)$, we put
	\[
	f(s) = \frac{(\mu_0-s)^2}{W(C_1(\mu_0-s)^2)},
	\quad
	C_1 = \frac{e^{\mu_0^2/f_0}}{f_0}
	\]
with $f_0 > 0$,
where $W$ is the Lambert $W$ function given by
	\[
	W(\sigma) e^{W(\sigma)} = \sigma
	\]
for $\sigma \geq 0$.
For the detail of the Lambert $W$ function,
we refer \cite{CGHJK96}, for example.
We note that the identity $f(0) = f_0$ holds
because the definition implies that
	\[
	W(\sigma e^\sigma) = \sigma
	\]
holds for $\sigma \geq 0$.
We also note that the identity
	\[
	\lim_{s \nearrow \mu_0} f(s) = f_0 e^{- \mu_0^2/f_0} > 0
	\]
follows from \cite[(3.1)]{CGHJK96}.
Now we claim that $f$ satisfies the identity
	\begin{align}
	\dot f(s)
	= - \frac{2 (\mu_0-s) f(s)}{(\mu_0-s)^2 + f(s)}
	\label{eq:3.2}
	\end{align}
for $s \in (0, \mu_0)$.
Indeed,
since $W$ enjoys the identity
	\[
	\dot W(\sigma) = \frac{W(\sigma)}{\sigma(1+W(\sigma))}
	\]
for $\sigma > 0$ (See \cite[(3.2)]{CGHJK96}),
we compute
	\begin{align*}
	\dot f(s)
	&= - 2 \frac{\mu_0-s}{W(C_1(\mu_0-s)^2)}
	+ 2 C_1 \frac{(\mu_0-s)^3}{W(C_1(\mu_0-s)^2)^2} \dot W(C_1(\mu_0-s)^2)\\
	&= - 2 \frac{\mu_0-s}{W(C_1(\mu_0-s)^2)}
	+ 2 \frac{\mu_0-s}{W(C_1(\mu_0-s)^2)(1+W(C_1(\mu_0-s)^2))}\\
	&= - 2 \frac{\mu_0-s}{1+W(C_1(\mu_0-s)^2)}\\
	&= - 2 \frac{(\mu_0-s)f(s)}{(\mu_0-s)^2+f(s)}.
	\end{align*}
Then we put $g(s) = e^{3 B (\mu_0-s)} f(s)$.
Since $g(s) \geq f(s)$ for $s \in \lbrack 0,\mu_0)$,
$\dot g$ is estimated by
	\begin{align*}
	\dot g (s)
	&= - \frac{2 (\mu_0-s) g(s)}{(\mu_0-s)^2+f(s)} - 3 B g(s)\\
	&\leq - \frac{2 (\mu_0-s) g(s)}{(\mu_0-s)^2+g(s)} - 3 B g(s).
	\end{align*}
Therefore, if $f_0 < e^{-3 B \mu_0} \delta$,
then the desired $g$ is obtained.
%]]]

%]]]

\section*{Acknowledgment}
The first author is supported
by JSPS Grants-in-Aid for JSPS Fellows 19J00334
and JSPS Early-Career Scientists 20K14337. The second  author was supported in part by  INDAM, GNAMPA - Gruppo Nazionale per l'Analisi Matematica, la Probabilit\`a e le loro Applicazioni, by Institute of Mathematics and Informatics, Bulgarian Academy of Sciences, by Top Global University Project, Waseda University and  the Project PRA 2018 49 of  University of Pisa.

%[[[ thebibliograpy made by biblist

%]]]
\end{document}